\newtheorem{theorem}{Theorem}[section]
\newtheorem{lemma}[theorem]{Lemma}
\newtheorem{proposition}[theorem]{Proposition}
\theoremstyle{definition}
\theoremstyle{remark}
\newtheorem{remark}{Remark}
\numberwithin{equation}{section}
\newcommand{\R}{\mathbb R}
\newcommand{\E}{\mathbb E}
 \newcommand{\FF}{\mathscr{F}}
 \newcommand{\cF}{\mathcal{F}}
\newcommand{\fH}{\mathfrak{H}}
 \newcommand{\PP}{\mathbb P}
\newcommand{\B}{\mathbb B}
\newcommand{\bi}{\textbf{i}}
\newcommand{\1}{\textbf{1}}
\newcommand{\e}{{\varepsilon}}
 \newcommand{\wh}{\widehat}
\begin{document}

\title[Spatial ergodicity of stochastic wave equations in dimensions 1,2 and 3]{Spatial ergodicity of stochastic wave equations in dimensions 1,2 and 3}

 \date{\today}

\author[D. Nualart]{David Nualart$^{\ast,1}$}

\author[G. Zheng]{Guangqu Zheng$^{\ast,2}$}

\maketitle

\vspace{-0.5cm}

\begin{center}  {\small \textit{University of Kansas}$^\ast$; nualart@ku.edu$^1$, zhengguangqu@gmail.com$^2$ } \end{center}

\begin{abstract}  In this note, we study a large class of stochastic wave equations with spatial dimension less than or equal to $3$.  Via a soft application of Malliavin calculus, we establish that their random field solutions are spatially ergodic.   \end{abstract}

\medskip\noindent
{\bf Mathematics Subject Classifications (2010)}: 	60H15; 60H07; 37A25.

\medskip\noindent
{\bf Keywords:} Ergodicity; Stochastic wave equation; Malliavin calculus.
\allowdisplaybreaks

\section{Introduction}

    In this article, we fix $d\in\{1,2,3\}$ and
consider the  stochastic wave equation  
\begin{equation}  \label{SWE}
\dfrac{\partial^2 u}{\partial t^2} = \Delta u + \sigma(u)   \dot{W},     \\
\end{equation}
on $\R_+\times \R^d$ with initial conditions 
$ u(0,x)=1$ and $ \frac {\partial  u} {\partial t} (0,x)=0 $,  where $\Delta$ is Laplacian in  the space variables and $\dot{W}$ is a centered Gaussian  noise with
covariance
\begin{equation}
\E[ \dot{W}(t,x)\dot{W}(s,y)    ] =  \delta_0(t-s) \gamma( x-y).    \label{cov}
\end{equation}

\medskip 

\emph{Throughout this article}, we fix the following conditions:
\begin{itemize}
 \item[\textbf{(C1)}]  $\sigma:\R\to\R$ is Lipschitz continuous with Lipschitz constant $L\in(0,\infty)$.
 
 \item[\textbf{(C2)}] $\gamma$ is a tempered  nonnegative and nonnegative definite measure, whose Fourier transform  $\mu$  satisfies  Dalang's condition:
  \begin{align}\label{DalangC}
  \int_{\R^d} \frac{\mu(dz )}{1+  | z| ^2} < \infty,
 \end{align}
 where $ | \cdot | $ denotes the Euclidean norm on $\R^d$.
\end{itemize}
Conditions  \textbf{(C1)} and  \textbf{(C2)}   ensure that    equation \eqref{SWE} has a unique  \emph{random field solution}, which is adapted to the filtration generated by $W$, such that $\sup\big\{  \E \big[  \vert u(t,x)\vert^k\big]:   (t,x) \in [0,T]\times \R^d\big\}      $ is finite for all  $T\in(0,\infty)$ and  $k\ge 2$,  and  
\begin{equation}\label{mild}
 u(t,x) = 1+    \int_0^t \int_{\R^d}  G(t-s,x-y) \sigma(u(s,y))W(ds,dy), 
\end{equation}
where   the above stochastic integral is defined in the sense of Dalang-Walsh and $G(t-s,x-y)$ denotes the fundamental solution to the corresponding deterministic  wave equation, \emph{i.e.} 
\begin{align}\label{funS}
G(t,\bullet) :=\begin{cases}
  \dfrac{1}{2} \1_{\{ | \bullet| < t\}} ,   &\text{if $d=1$} \\
 \dfrac{1}{2\pi \sqrt{t^2 - | \bullet|^2}} \1_{\{| \bullet | <t \}},  &\text{if $d=2$}\\
 \dfrac{1}{4\pi t} \sigma_t, &\text{if $d=3$},
\end{cases}
\end{align}
with $\sigma_t$ denoting the surface measure on $\partial \B_t: = \{ x\in\R^3: |x| =t \}$; see Example 6 and Theorem 13 in Dalang's paper \cite{Dalang99}.   The proof of \cite[Theorem 13]{Dalang99} follows from a standard Picard iteration scheme, from which one can see that $u(t,x)\equiv1$ if $\sigma(1)=0$.

It is not difficult to see that for each fixed $t>0$, $\big\{ u(t,x): x\in\R^d\big\}$ is strictly stationary meaning its law is invariant under spatial shift. 
Indeed,  
for each $y\in \R^d$,  the random field $\{ u(t,x+y) : x\in \R^d\}$  coincides almost surely with  the random field $u$ driven by the shifted  noise $W_y$ given by
\[
W_y(\phi)= \int_{\R_+} \int_{\R^d} \phi(s,x-y) W(ds,dx), ~\phi\in C_c^\infty(R_+\times \R^d)
\]
  The noise $W_y$ has the same distribution as $W$, which is enough for us to conclude the stationarity property.  We refer readers to  Lemma 7.1 in \cite{CKNP19} and footnote 1 in \cite{DNZ18} for similar arguments. 

  Then it is natural to define    an associated family of shifts $\{\theta_y: y\in\R^d\}$ by setting
 \[
 \theta_y(\{u(t,x), x\in \R^d\})=  \{ u(t, x+y) ,x\in \R^d \},
 \]
 which  preserve the law of the process.   Then the following question arises:
    \begin{center}
\emph{  Are the   invariant sets for   $\{\theta_y:y\in\R^d\}$  trivial? }
    \end{center}
That is,   for each fixed $t>0$,     is $\big\{ u(t,x): x\in\R^d\big\}$ ergodic? See the book  \cite{Peterson90} for more account on ergodic theory.   In the following theorem, we provide an affirmative answer to the above question.

  \begin{theorem}\label{Mainthm}  Assume that the spectral measure has no atom at zero, \emph{i.e.} $\mu\big( \{ 0 \} \big) = 0$, then for each $t>0$, $\{ u(t,x): x\in\R^d\}$ is ergodic. 
\end{theorem}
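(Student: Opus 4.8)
The plan is to combine the mean ergodic theorem with the Malliavin-calculus covariance inequality, so that the hypothesis $\mu(\{0\})=0$ enters through a single spectral computation. First I would record the reduction. Since $\{\theta_w : w \in \R^d\}$ is a measure-preserving action of $\R^d$, the von Neumann mean ergodic theorem guarantees that $|\B_R|^{-1}\int_{\B_R}\theta_w F\,dw$ converges in $L^2$ to the conditional expectation of $F$ given the invariant $\sigma$-field, where $\B_R = \{x\in\R^d : |x|\le R\}$. Ergodicity is therefore equivalent to this limit being $\E[F]$ for every $F\in L^2\big(\Omega, \sigma\{u(t,x):x\in\R^d\}, \PP\big)$, and by density it suffices to treat cylindrical functionals $F = g(u(t,x_1),\dots,u(t,x_m))$ with $g\in C_b^\infty(\R^m)$. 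Writing $C_F(w) := \mathrm{Cov}(F, \theta_w F)$ and using stationarity one has
\[
\Big\| \frac{1}{|\B_R|}\int_{\B_R}\theta_w(F-\E F)\,dw\Big\|_2^2 = \frac{1}{|\B_R|^2}\int_{\B_R}\int_{\B_R} C_F(w-w')\,dw\,dw',
\]
and a standard averaging-kernel estimate shows this vanishes as soon as $|\B_R|^{-1}\int_{\B_R}|C_F(w)|\,dw\to 0$. Thus the whole theorem reduces to proving the latter Cesàro decay.

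Next I would invoke Malliavin calculus. Every cylindrical $F$ above lies in $\mathbb{D}^{1,2}$, with $D_{s,y}F = \sum_i \partial_i g(\cdots)\, D_{s,y}u(t,x_i)$, and the covariance identity $\mathrm{Cov}(F,G) = \E\langle DF, -DL^{-1}G\rangle_{\fH}$ together with Mehler's formula and the contractivity of the Ornstein--Uhlenbeck semigroup on $L^2(\Omega)$ yields
\[
|\mathrm{Cov}(F,G)| \le \int_0^t \int_{\R^d} \mathcal{F}\Phi_s(\xi)\,\overline{\mathcal{F}\Psi_s(\xi)}\,\mu(d\xi)\,ds, \qquad \Phi_s(y) = \|D_{s,y}F\|_2,\ \ \Psi_s(y)=\|D_{s,y}G\|_2,
\]
where $\mathcal{F}$ is the spatial Fourier transform and I used Parseval to pass from the $\gamma$-weighted double integral to the spectral measure. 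Taking $G=\theta_w F$, the stationarity of the solution field (a spatial shift of $u$ corresponds to shifting the driving noise) gives $\Psi_s(y)=\Phi_s(y-w)$, hence $\mathcal{F}\Psi_s(\xi)=e^{-iw\cdot\xi}\mathcal{F}\Phi_s(\xi)$ and
\[
|C_F(w)| \le \int_0^t\int_{\R^d} |\mathcal{F}\Phi_s(\xi)|^2\, e^{i w\cdot\xi}\,\mu(d\xi)\,ds.
\]

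The heart of the argument is then a clean spectral computation. Averaging the last bound over $w\in\B_R$ and interchanging integrals,
\[
\frac{1}{|\B_R|}\int_{\B_R}|C_F(w)|\,dw \le \int_0^t\int_{\R^d} |\mathcal{F}\Phi_s(\xi)|^2\, \chi_R(\xi)\,\mu(d\xi)\,ds, \qquad \chi_R(\xi):=\frac{1}{|\B_R|}\int_{\B_R}e^{iw\cdot\xi}\,dw.
\]
Here $|\chi_R|\le 1$, $\chi_R(0)=1$, and $\chi_R(\xi)\to 0$ for every $\xi\neq 0$ (it is the normalized transform of a ball indicator). Provided the integrand is dominated by the $(\mu\otimes ds)$-integrable function $|\mathcal{F}\Phi_s(\xi)|^2$ — which is exactly the finiteness of the bound at $w=0$, i.e. a finite-variance estimate following from Dalang's condition \eqref{DalangC} and the uniform moment bounds of \eqref{mild} — dominated convergence forces the right-hand side to converge to $\int_0^t |\mathcal{F}\Phi_s(0)|^2\,\mu(\{0\})\,ds = 0$, using the hypothesis $\mu(\{0\})=0$. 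This gives the required Cesàro decay and completes the proof.

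The step I expect to be most delicate is the rigorous construction of the profiles $\Phi_s(y)=\|D_{s,y}u(t,x_i)\|_2$ and their Fourier transforms when $d=3$: there $G(t-s,\cdot)$ is the surface measure on $\partial\B_{t-s}$, so the leading term $G(t-s,x-y)\sigma(u(s,y))$ of the Malliavin derivative is measure-valued in $y$ and $\Phi_s$ need not be a bona fide function. The way around this is to work entirely on the Fourier side, exploiting $\mathcal{F}G(t-s,\cdot)(\xi)=\sin\!\big((t-s)|\xi|\big)/|\xi|$, and to establish both the spectral covariance inequality and the dominating bound directly in terms of these transforms; one must also check that the Mehler representation applies to the measure-valued derivatives and that all interchanges of integration are legitimate, which is ensured by the moment estimates available under \textbf{(C1)}--\textbf{(C2)}. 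The cases $d=1,2$ are easier, since there $G$ is a locally integrable function and $\Phi_s$ is a genuine integrable function.
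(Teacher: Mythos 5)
Your outline is sound for $d=1,2$, and your spectral endgame (average the Poincar\'e bound over $w\in\B_R$, note $\chi_R(\xi)\to 0$ for $\xi\neq 0$, and apply dominated convergence using $\mu(\{0\})=0$) is the same mechanism the paper uses, phrased through Riemann--Lebesgue instead of your kernel $\chi_R$. But there is a genuine gap exactly where you flag "the most delicate step": the case $d=3$. Your entire chain rests on the profiles $\Phi_s(y)=\|D_{s,y}u(t,x_i)\|_2$ being honest functions, since the covariance inequality \eqref{Poincare} requires taking the $L^2(\Omega)$-norm of the derivative \emph{pointwise in $(s,y)$} before contracting against $\gamma$. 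For $d=3$ the fundamental solution is the surface measure $\frac{1}{4\pi t}\sigma_t$, so $Du(t,x)$ is a random measure and the pointwise norm $\|D_{s,y}u(t,x)\|_2$ --- hence $\Phi_s$ and a fortiori $\mathcal{F}\Phi_s$ --- is not defined; this is precisely the failure of \eqref{FEMD} that the paper points out. Your proposed remedy, "work entirely on the Fourier side" using $\mathcal{F}G(t-s,\cdot)(\xi)=\sin((t-s)|\xi|)/|\xi|$, does not repair this: the passage from the covariance identity to a bound involving $|\mathcal{F}\Phi_s(\xi)|^2$ requires replacing the derivative by its pointwise norm \emph{first} (norms and Fourier transforms do not commute), so the inequality $|C_F(w)|\le\int_0^t\int_{\R^d}|\mathcal{F}\Phi_s(\xi)|^2 e^{\bi w\cdot\xi}\mu(d\xi)\,ds$ has no meaning in $d=3$, and asserting that "Mehler applies to measure-valued derivatives" with "all interchanges legitimate" is naming the open difficulty, not resolving it.

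The paper's solution is structurally different at this point and is its main content: it never works with $Du(t,x)$ itself. It mollifies the kernel, $G_n=G\ast\psi_n$, so that the approximating solutions $u_n$ of \eqref{APP} converge to $u$ uniformly in $L^p$ (\eqref{uAPP}, from Quer-Sardanyons--Sanz-Sol\'e), and then runs Picard iterations $u_{n,k}\to u_n$ (\eqref{BDD:nk}). For the iterates, Proposition \ref{prop:MD} gives $\|D_{s,y}u_{n,k}(t,x)\|_p\lesssim \mathbf{1}_{\B_{a(k+1)/n+T}}(x-y)$ --- a genuine function bound, compactly supported, with constants depending on $(n,k)$ --- which feeds into \eqref{Poincare} and then into your spectral computation via Lemma \ref{lem:DCT}. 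The crucial bookkeeping you are missing is that the variance is split as $V(R)\le 2V_{1,n,k}(R)+4V_{2,n,k}(R)+4V_{3,n}(R)$ with the two approximation errors controlled \emph{uniformly in $R$} (by stationarity and Minkowski), so one may fix $n$, then $k$, then send $R\to\infty$ using only the iterate bound. Without some such approximation layer (or a new theory of measure-valued Malliavin derivatives, which the paper explicitly leaves open for the second-order problem), your argument proves the theorem only for $d=1,2$, where $\Phi_s$ is a bona fide integrable function dominated by $G(t-s,x_i-\cdot)$.
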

    
Condition $\mu\big( \{ 0 \} \big) = 0$ echoes Maruyama's early work \cite{Maruyama49} on ergodicity of stationary Gaussian processes and it also finds its place in the recent work of Chen, Khoshnevisan, Nualart and Pu \cite{CKNP19} on the solution to stochastic heat equations.

\begin{remark}\label{REM1}
  Under Dalang's condition \eqref{DalangC}, property $\mu\big( \{ 0 \} \big) = 0$ is equivalent to 
$\gamma(\B_R)=o(R^d)$, as $R\rightarrow  +\infty$;  see \cite[Theorem 1.1]{CKNP19}. Here and throughout the paper we will make use of the notation
$\B_R= \{x\in \R^d: |x| \le R\}$ for any $R>0$. As a consequence, if $\gamma$ is a function, property $\mu\big( \{ 0 \} \big) = 0$ is equivalent to
\[
\lim_{R\rightarrow +\infty} \frac 1 {| \B_R|} \int_{\B_R} \gamma(x) dx =0,
\]
which means that the asymptotic average of $\gamma$ is zero.
\end{remark}

The ergodicity gives us the first-order result: With $\omega_d$ denoting the volume of $\B_1$,
 \[
\frac{1}{\omega_d R^d}\int_{\B_R} u(t,x)dx \xrightarrow{R\to\infty} 1
 \]
in $L^2(\Omega)$.
Then it is natural to investigate the corresponding second-order fluctuations.  They have been established in several cases briefly recalled below:
 
\begin{itemize}

\item When $d=1$, the Gaussian noise is white in time and behaves as a fractional noise in space with Hurst parameter $H\in[1/2,1)$, the authors of \cite{DNZ18} prove the Gaussian fluctuations for  spatial averages.

\item  The authors of \cite{BNZ20} investigate the case where   $d=2$ and $\gamma(z) = |z|^{-\beta}$ with $\beta\in(0,2)$.

\item In \cite{NZ20a}, we continued the study of the 2D stochastic wave equation when the covariance kernel $\gamma$ is integrable. 

\end{itemize}
Our Theorem \ref{Mainthm} (see also Remark \ref{REM1}) establishes the spatial ergodicity for all these cases. The key ingredient in the aforementioned references is a fundamental $L^p(\Omega)$-estimate of the Malliavin derivative of the solution:
\begin{align}\label{FEMD}
 \big\| D_{s,y} u(t,x) \big\|_p \lesssim G_{t-s}(x-y),
\end{align}
where $D$ is the Malliavin derivative operator defined over the isonormal Gaussian process $\big\{ W(\phi) : \phi\in\fH\big\}$ that will be defined in Section \ref{PRE}.  Such an inequality fails to work when $d=3$, as   the fundamental solution $G(t,\bullet)$ is a measure for $d=3$ (see  \eqref{funS}).  The Malliavin derivative $Du(t,x)$, unlike in previous works, is a random measure and it is not clear how to make sense of the left expression in \eqref{FEMD}. 
  We leave this problem for future research that will require some novel ideas in dealing with the Malliavin derivative.

\bigskip

The rest of the article is organized as follows: In Section \ref{PRE}, we briefly collect preliminary facts for our proofs that will be presented in Section \ref{proofs}.

\section{Preliminaries}\label{PRE}

  In this section we present some preliminaries on stochastic analysis and  Malliavin calculus.
    
    \subsection{Basic stochastic analysis}

Let  $\fH$ be defined  as the completion of $C_c(\R_+\times\R^d)$ under the inner product 
\[
\langle f, g\rangle_\fH = \int_{\R_+\times\R^{2d}} f(s,y) g(s, z) \gamma(y-z)dydzds =\int_{\R_+} \left(\int_{\R^d} \FF f(s,\xi) \FF g(s, -\xi) \mu(d\xi)\right)ds, 
\]
where $\FF f(s,\xi)=\int_{\R^d} e^{-\bi x\cdot \xi} f(s,x)dx$. Consider  an isonormal Gaussian process associated to the Hilbert space $\fH$, denoted by $W=\big\{ W(\phi): \phi\in \fH \big\}$. That is, $W$ is a centered  Gaussian family of random variables such that
$
\E\big[ W(\phi) W(\psi) \big] = \langle \phi, \psi\rangle_\fH$ for any   $\phi, \psi\in \fH$.
As the noise $W$ is white in time, a martingale structure naturally appears. First we define $\cF_t$ to be the $\sigma$-algebra generated by the $\PP$-negligible  sets and  the family of random variables $\big\{ W(\phi): \phi\in C^\infty(\R_+\times\R^d)$ has compact support contained in $[0,t]\times\R^d \big\}$, so we have a filtration $\mathbb{F}=\{\cF_t: t\in\R_+\}$. If $\big\{\Phi(s,y): (s,y)\in\R_+\times\R^d\big\}$ is  an $\mathbb{F}$-adapted  random field such that $\E\big[ \| \Phi\|_\fH^2 \big] <+\infty$, then 
\[
M_t = \int_{[0,t]\times\R^d} \Phi(s,y)W(ds,dy),
\]
interpreted as the Dalang-Walsh integral (\cite{Dalang99,NQS, Walsh}), is a square-integrable $\mathbb{F}$-martingale with quadratic variation  
\[
\langle M \rangle_t = \int_{[0,t]\times\R^{2d}} \Phi(s,y) \Phi(s,z) \gamma(y-z) dy dzds.
\]
A suitable    version of Burkholder-Davis-Gundy inequality (BDG for short) holds in this setting:  If $\big\{\Phi(s,y): (s,y)\in\R_+\times\R^d\big\}$ is an adapted random field with respect to $\mathbb{F}$ such that $ \| \Phi\|_\fH \in L^p(\Omega)$ for some $p\geq 2$, then
\begin{equation} \label{BDG}
\left\|  \int_{[0,t]\times\R^d} \Phi(s,y)W(ds,dy) \right\|_p^2 \leq 4p \left\| \int_{[0,t]\times\R^{2d}} \Phi(s,y)\Phi(s,z) \gamma(y-z) dydzds \right\|_{p/2};
\end{equation}
see \textit{e.g.} \cite[Theorem B.1]{Khoshnevisan}, where here $\| \bullet \|_p$ denotes the usual $L^p(\Omega)$-norm.

 \subsection{Malliavin calculus}
Now let us recall some basic facts on  the Malliavin calculus associated with $W$. For any unexplained notation and result,   we refer to the book \cite{Nualart}.  We denote by $C_p^{\infty}(\R^n)$ the space of smooth functions with all their partial derivatives having at most polynomial growth at infinity. Let $\mathcal{S}$ be the space of simple functionals of the form 
$F = f(W(h_1), \dots, W(h_n))
$ for $f\in C_p^{\infty}(\R^n)$ and $h_i \in \fH$, $1\leq i \leq n$. Then, the Malliavin derivative  $DF$ is the $\fH$-valued random variable given by
\begin{align*}
DF=\sum_{i=1}^n  \frac {\partial f} {\partial x_i} (W(h_1), \dots, W(h_n)) h_i\,.
\end{align*}
 The derivative operator $D$  is   closable   from $L^p(\Omega)$ into $L^p(\Omega;  \fH)$ for any $p \geq1$ and   we define $\mathbb{D}^{1,p}$ to be the completion of $\mathcal{S}$ under the norm
$$
\|F\|_{1,p} = \left(\E\big[ |F|^p \big] +   \E\big[  \|D F\|^p_\fH \big]   \right)^{1/p} \,.
$$
   The {\it chain rule} for $D$ asserts that if $F\in\mathbb{D}^{1,2}$ and $h:\R\to\R$ is Lipschitz, then $h(F)\in\mathbb{D}^{1,2}$  with 
\begin{equation} \label{chainrule}
D[h(F)] = h'(F) DF,
\end{equation}
where $h'$ denotes any version of the almost everywhere derivative (in view of Rademacher's theorem) satisfying 
\[
h(x) = h(0) + \int_0^x h'(t)dt \quad\text{for $x\geq 0$,} \quad h(0) = h(x) + \int_x^0 h'(t)dt\quad\text{for $x< 0$ }
\]
and $\|h' \|_\infty$ is bounded by the Lipschitz constant of $h$.  

We denote by $\delta$ the adjoint of  $D$ given by the duality formula 
\begin{equation} \label{D:delta}
\E[\delta(u) F] = \E[ \langle u, DF \rangle_\mathfrak{H}]
\end{equation}
for any $F \in \mathbb{D}^{1,2}$ and $u\in{\rm Dom} \, \delta \subset L^2(\Omega; \fH)$,  the domain of $\delta$. The operator $\delta$ is
also called the Skorohod integral and 
in our context, the Dalang-Walsh integral coincides with the Skorohod integral: Any   adapted random field $\Phi$ that satisfies $\E\big[ \|\Phi\|_\fH^2 \big]<\infty $ belongs to the domain of $\delta$  and
\begin{align}\label{EQ0}
\delta (\Phi) = 
\int_0^\infty \int_{\R^d} \Phi(s,y) W(d s, d y).
\end{align}
The operators $D,\delta$ satisfy the Heisenberg's commutation relation: \[
(D\delta - \delta D)(V) = V.
\]
 From this relation,   we have for any adapted random field $\Phi$ belonging to $\mathbb{D}^{1,2} (\fH)$  given as in \eqref{EQ0},
 \begin{equation}
 D_{s,y} \int_0^\infty \int_{\R^d} \Phi(r,z) W(d r, d z) = \Phi(s,y) + \int_0^\infty \int_{\R^d} D_{s,y}\Phi(r,z) W(d r, d z).
 \end{equation} \label{ECU1}

 It is known that for a random variable $F\in\mathbb{D}^{1,2}$, one can represent it as a stochastic integral:
 \[
 F = \E[F] + \int_{\R_+\times\R^d} \E\big[ D_{s,y}F \vert \mathcal{F}_s \big] W(ds,dy)
 \]
  (see \emph{e.g.} \cite[Proposition 6.3]{CKNP19}). This is known as  Clark-Ocone formula and it leads to the following Poincar\'e inequality:  For any such  two random variables $F,G\in\mathbb{D}^{1,2}$, we have
\begin{equation} \label{Poincare}
|{\rm Cov}(F,G )| \le  \int_0^\infty \int_{\R^{2d}}   \|  D_{s,y}F\|_{2}\|  D_{s,z}G\|_{2}  \gamma(y-z) dy dz ds.
\end{equation}

Throughout this note, we write $A\lesssim B$ to mean that $A\leq K B$ for some immaterial constant which may vary from line to line. 

\section{Proof of Theorem  \ref{Mainthm}}\label{proofs}

  We first introduce the following regularization of the kernel $G$:
Given a  nonnegative function $\psi\in C^\infty_c(\R^d)$ such that $\int_{\R^d} \psi(z)dz =1$, we define  $\psi_n(z) = n^d \psi(nz)$  for all $z\in \R^d$ and 
\begin{align}\label{def:GN}
G_n(t, x) = \int_{\R^d} G(t, dy) \psi_n(x-y).
\end{align} 
Here $G(t, dy)$ denotes $G(t,y)dy$, when $d=1,2$.
Consider the approximating sequence of random fields $ \{u_n\}_{ n\geq 1}$ defined by 
\begin{align}\label{APP}
u_n(t,x) = 1+ \int_0^t\int_{\R^d} G_n(t-s, x-y) \sigma \big(  u_n(s,y) \big) W(ds, dy).
\end{align}
It holds that,    for any $p\ge 1$
\begin{align}\label{uAPP}
\lim_{n\to+\infty}\sup_{(t,x)\in[0,T]\times\R^d} \big\| u_n(t,x) - u(t,x) \big\|_p =0
\end{align}
for any $T\in(0,\infty)$, see \cite[Proposition 1]{QSSS04}.   Fix $n\geq 1$ and   consider the Picard iteration scheme for $u_n$: We put $u_{n,0}(t,x)=1$ and for $k\geq0$,
\begin{align}
u_{n, k+1}(t,x) = 1 +  \int_0^t\int_{\R^d} G_n(t-s, x-y) \sigma\big( u_{n,k}(s,y) \big) W(ds,dy). \label{Picard}
\end{align}
 It is known that   for any $T>0$ and any $p\in [ 1,\infty)$,
\begin{align}\label{BDD:nk}
 \lim_{k\to+\infty}  \sup_{(t,x)\in[0,T]\times\R^d}\big\| u_{n,k}(t,x) - u_n(t,x) \big\|_p = 0;
\end{align}
the proof can be done following the same arguments as in the proof of   \cite[Theorem 13]{Dalang99}.  

\medskip

In the following, we present the key ingredient to prove our main result.
 
\begin{proposition} \label{prop:MD} Let $u_{n,k}$ be given as in \eqref{Picard} and fix $T\in(0,\infty)$. Then for any $p\geq 1$, the following estimate holds for all  $(t,x)\in [0,T] \times\R^d$ and for almost every $(s,y)\in [0,t]\times \R^d$
\[
\big\| D_{s,y} u_{n,k}(t,x) \big\|_p \lesssim \mathbf{1}_{\B_{\frac {a(k+1)} n+T}}(x-y),
\]
where $\B_a = \{ x\in\R^d: |x| \leq a\}$ contains the support of $\psi$ for some   $a>0$ and the implicit constant only depends on $(p, T, L, \gamma, n,k)$.   
\end{proposition}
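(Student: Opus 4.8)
The plan is to argue by induction on the Picard level $k$. Since $\|\cdot\|_p\le\|\cdot\|_2$ on the probability space $\Omega$, I may assume $p\ge2$ and deduce the case $1\le p<2$ for free. The base case $k=0$ is immediate, as $u_{n,0}\equiv1$ forces $D_{s,y}u_{n,0}(t,x)=0$.

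First I would establish, by induction, that $u_{n,k}(t,x)\in\mathbb{D}^{1,p}$ for every $p\ge1$ and that its Malliavin derivative obeys the recursion
\begin{align*}
D_{s,y} u_{n,k+1}(t,x) &= G_n(t-s,x-y)\,\sigma\big(u_{n,k}(s,y)\big) \\
&\quad + \int_s^t\!\!\int_{\R^d} G_n(t-r,x-z)\,\Sigma_{n,k}(r,z)\,D_{s,y}u_{n,k}(r,z)\,W(dr,dz).
\end{align*}
Here $\Sigma_{n,k}$ is the a.e.-derivative factor furnished by the chain rule \eqref{chainrule}, so that $|\Sigma_{n,k}|\le L$, and the stochastic integral runs only over $r\ge s$ because $D_{s,y}u_{n,k}(r,z)=0$ for $r<s$ by adaptedness. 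This identity is justified by the closability of $D$ together with the consequence of Heisenberg's commutation relation displayed in Section \ref{PRE}, applied to the smoothed integrand $G_n(t-\cdot,x-\cdot)\sigma(u_{n,k})$, which lies in $\mathbb{D}^{1,2}(\fH)$ by the inductive hypothesis.

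The heart of the matter is to propagate a \emph{time-localized} support estimate, rather than the stated one: I would carry the sharper hypothesis
\[
\big\|D_{s,y}u_{n,k}(r,z)\big\|_p \le C_k\,\1_{\B_{(r-s)+ka/n}}(z-y), \qquad 0\le s\le r\le T,
\]
because the radius $\tfrac{a(k+1)}{n}+T$ appearing in the statement does not close under the recursion on its own. Two features of the smoothed kernel drive the estimate: (i) $G_n(\tau,\cdot)$ is supported in $\B_{\tau+a/n}$, since $G(\tau,\cdot)$ lives in $\B_\tau$ and $\psi_n$ in $\B_{a/n}$; and (ii) $G_n(\tau,\cdot)$ is a bounded \emph{function}, with $\|G_n(\tau,\cdot)\|_\infty\le \tau\, n^d\|\psi\|_\infty$, because $G(\tau,\cdot)$ is a finite measure of total mass $\tau$ in each of the dimensions $d=1,2,3$. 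For the first term I use (i) and (ii) together with the uniform moment bounds on $u_{n,k}$ and the Lipschitz property of $\sigma$, bounding its $L^p$-norm by a constant times $\1_{\B_{(t-s)+a/n}}(x-y)$. For the stochastic-integral term I apply the BDG inequality \eqref{BDG}, then Minkowski's integral inequality in $L^{p/2}(\Omega)$ and Cauchy--Schwarz, together with $|\Sigma_{n,k}|\le L$ and the inductive hypothesis; this leaves the deterministic integral
\[
\int_s^t\!\!\int_{\R^{2d}} G_n(t-r,x-z)\,G_n(t-r,x-z')\,\gamma(z-z')\,dz\,dz'\,dr,
\]
which is finite uniformly in $x$ under Dalang's condition \eqref{DalangC}. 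Feeding (i) and the inductive support through $|x-y|\le|x-z|+|z-y|$ yields support in $\B_{(t-r)+a/n+(r-s)+ka/n}=\B_{(t-s)+(k+1)a/n}$, which closes the induction; relaxing $t-s\le T$ and $\tfrac{(k+1)a}{n}\le\tfrac{a(k+2)}{n}$ then recovers the stated radius.

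The constants satisfy a linear recursion $C_{k+1}\le c_1+c_2 C_k$ with $c_1,c_2$ depending only on $(p,T,L,\gamma,n)$, so each $C_k$ is finite and depends on $(p,T,L,\gamma,n,k)$ as announced. The step I expect to be delicate is the support bookkeeping in the stochastic-integral term: one must retain the exact increments $t-r$ and $r-s$ and bound only their sum $t-s$ by $T$ at the very end, since estimating each increment by $T$ separately would produce the useless radius $2T+\cdots$ and destroy the induction.
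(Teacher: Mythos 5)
Your proposal is correct, and it reaches the stated bound by a genuinely different organization than the paper. The paper does not induct on $k$: it unrolls the derivative recursion completely, writing $D_{s,y}u_{n,k+1}(t,x)=\sum_{\ell=0}^{k}T_\ell$ as a finite sum of iterated Dalang--Walsh integrals (display \eqref{finiteit}), and estimates each $T_\ell$ by repeated applications of BDG; the support bookkeeping is done by telescoping the exact increments $r_{j-1}-r_j$ along the whole chain of kernels, so that the product of $\ell+1$ kernels forces $x-y\in\B_{\frac{a(\ell+1)}{n}+t-s}$. Your one-step induction with the time-localized hypothesis $\|D_{s,y}u_{n,k}(r,z)\|_p\le C_k\,\mathbf{1}_{\B_{(r-s)+ka/n}}(z-y)$ is precisely the folded version of that telescoping, and your remark that the crude radius $T+\cdots$ fails to close the recursion is exactly the point the paper handles by retaining the $r_{j-1}-r_j$ inside each $T_\ell$. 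Both arguments rest on the same two ingredients: your facts (i)--(ii) are the paper's \eqref{ineq:GN} and Lemma \ref{lem:GN} (your mass computation $G(\tau,\R^d)=\tau$ for $d=1,2,3$ is correct), and your assertion that $\int_s^t\int_{\R^{2d}}G_n(t-r,x-z)G_n(t-r,x-z')\gamma(z-z')\,dz\,dz'\,dr$ is finite uniformly in $x$ is true but should be routed through Lemma \ref{lem:DCT}: bound $G_n(t-r,x-\cdot)\le\Theta(T,n)\mathbf{1}_{\B_{a/n+T}}(x-\cdot)$ and apply \eqref{DCT}, since $\gamma$ is only a measure and one cannot invoke a generic convolution estimate; this is the one spot where your sketch is terse rather than wrong. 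The trade-off between the two routes: your linear recursion $C_{k+1}\le c_1+c_2C_k$ yields constants growing geometrically in $k$, whereas the paper's expansion gives the factorial bound $\|T_\ell\|_p^2\le\frac{\Lambda(T,p)^2\Theta(T,n)^2}{(\ell-1)!}\big(4pL^2T\,\mathcal{U}_{\frac an+T}\,\Theta(T,n)^2\big)^\ell$, whose square roots are summable, so the paper's implicit constant is in fact uniform in $k$. For the proposition as stated (constant allowed to depend on $k$), and for its use in the proof of Theorem \ref{Mainthm} (where $n$ and $k$ are fixed before letting $R\to\infty$), both versions are equally sufficient; your induction is somewhat lighter on notation and even delivers the marginally sharper radius $\frac{ka}{n}+T$ in place of $\frac{a(k+1)}{n}+T$.
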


Before we proceed with the proof of Proposition \ref{prop:MD} we 
show two technical lemmas.

\begin{lemma} \label{lem:DCT} Suppose   the Dalang's condition \eqref{DalangC} is satisfied. For any $T\in(0,\infty)$, we have
\begin{align}\label{DCT}
\mathcal{U}_T : = \sup_{b\in[0,T]}\int_{\R^d}  \big\vert \FF \mathbf{1}_{\B_b}(\xi)  \big\vert^2 \mu(d\xi) <\infty.
\end{align}
 
\end{lemma}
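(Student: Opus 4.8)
The plan is to reduce \eqref{DCT} to Dalang's condition \eqref{DalangC} by establishing a pointwise bound on the integrand that is \emph{uniform} in $b\in[0,T]$, namely
\begin{align}\label{unifbd}
\big\vert \FF\mathbf{1}_{\B_b}(\xi)\big\vert^2 \le \frac{K_T}{1+|\xi|^2},\qquad b\in[0,T],\ \xi\in\R^d,
\end{align}
for a constant $K_T$ depending only on $T$ and $d$. Once \eqref{unifbd} is in hand, integrating against $\mu$ gives $\int_{\R^d}\vert\FF\mathbf{1}_{\B_b}(\xi)\vert^2\mu(d\xi)\le K_T\int_{\R^d}(1+|\xi|^2)^{-1}\mu(d\xi)$, where the right-hand side is finite by \eqref{DalangC} and independent of $b$; taking the supremum over $b\in[0,T]$ then yields \eqref{DCT}.

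To prove \eqref{unifbd} I would combine two elementary estimates on $\FF\mathbf{1}_{\B_b}(\xi)=\int_{\B_b}e^{-\bi x\cdot\xi}\,dx$. The first is the trivial bound $\vert\FF\mathbf{1}_{\B_b}(\xi)\vert\le\vert\B_b\vert=\omega_d b^d$, valid since $\vert e^{-\bi x\cdot\xi}\vert=1$. The second is a decay estimate obtained by integration by parts: applying the divergence theorem componentwise to $\int_{\B_b}\partial_{x_j}e^{-\bi x\cdot\xi}\,dx$ and using that the outward unit normal on $\partial\B_b$ is $x/b$ gives $-\bi\,\xi_j\,\FF\mathbf{1}_{\B_b}(\xi)=b^{-1}\int_{\partial\B_b}e^{-\bi x\cdot\xi}x_j\,\sigma_b(dx)$, where $\sigma_b$ is the surface measure on $\partial\B_b$. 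Summing against $\xi_j$ over $j$, bounding $\vert x\cdot\xi\vert\le b|\xi|$ on $\partial\B_b$, and using the total mass $\sigma_b(\partial\B_b)=d\,\omega_d\,b^{d-1}$, I arrive at
\begin{align}\label{decaybd}
\vert\FF\mathbf{1}_{\B_b}(\xi)\vert\le\frac{d\,\omega_d\,b^{d-1}}{|\xi|},\qquad \xi\neq0.
\end{align}

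With \eqref{decaybd} and the trivial bound available, \eqref{unifbd} follows by splitting $\R^d$ into $\{|\xi|\le1\}$ and $\{|\xi|>1\}$. On $\{|\xi|\le1\}$ I use $\vert\FF\mathbf{1}_{\B_b}(\xi)\vert^2\le(\omega_d T^d)^2$ together with $1+|\xi|^2\le2$; on $\{|\xi|>1\}$ I use \eqref{decaybd} with $b^{d-1}\le\max(1,T^{d-1})$ together with the inequality $|\xi|^{-2}\le2(1+|\xi|^2)^{-1}$, which holds precisely because $|\xi|\ge1$. Collecting the resulting constants produces $K_T$ and completes the argument.

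The only substantive point is the decay estimate \eqref{decaybd}; everything else is bookkeeping. I expect the rate $|\xi|^{-1}$ to be the crux, since it is exactly the threshold needed to match \eqref{DalangC}, and the integration-by-parts argument delivers this rate with a constant that is uniform in $b\le T$. Alternatively, one may invoke the explicit radial formula $\FF\mathbf{1}_{\B_b}(\xi)=(2\pi)^{d/2}b^{d/2}|\xi|^{-d/2}J_{d/2}(b|\xi|)$ together with the large-argument Bessel bound $\vert J_{d/2}(r)\vert\lesssim r^{-1/2}$, which gives the sharper decay $\vert\FF\mathbf{1}_{\B_b}(\xi)\vert\lesssim b^{(d-1)/2}|\xi|^{-(d+1)/2}$; but the cruder bound \eqref{decaybd} already suffices for all $d\in\{1,2,3\}$.
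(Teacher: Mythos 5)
Your proof is correct, and it takes a genuinely different route from the paper's. The paper's argument is the one you sketch as an alternative in your final sentence: it invokes the explicit radial formula $\big\vert \FF \mathbf{1}_{\B_b}(\xi)\big\vert^2 = (2\pi b)^d \vert\xi\vert^{-d} J_{d/2}(b\vert\xi\vert)^2$ from \cite[Lemma 2.1]{NZ19EJP}, together with the boundedness of $J_{d/2}$ and the large-argument bound $\vert J_{d/2}(x)\vert \le C\vert x\vert^{-1/2}$, and then splits at $\vert\xi\vert=1$ exactly as you do, controlling the low-frequency part by $b^{2d}\mu(\{\vert\xi\vert\le 1\})$ and the high-frequency part by $b^{d-1}\int_{\vert\xi\vert>1}\vert\xi\vert^{-d-1}\mu(d\xi)$, both finite by \eqref{DalangC} since $d\ge 1$. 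Your main argument instead replaces the Bessel machinery with an elementary divergence-theorem integration by parts; I checked the computation, and the identity $-\bi\,\xi_j\,\FF\mathbf{1}_{\B_b}(\xi)=b^{-1}\int_{\partial\B_b}e^{-\bi x\cdot\xi}x_j\,\sigma_b(dx)$ and the resulting bound $\vert\FF\mathbf{1}_{\B_b}(\xi)\vert\le d\,\omega_d\,b^{d-1}\vert\xi\vert^{-1}$ are correct, including the degenerate-looking cases $d=1$ (where $\partial\B_b=\{\pm b\}$ and $b^{d-1}=1$ uniformly in small $b$) and $b=0$. What each approach buys: yours is self-contained (no citation to an external Fourier computation, no Bessel asymptotics), works in every dimension, and delivers precisely the $\vert\xi\vert^{-2}$ square-decay that matches the threshold in \eqref{DalangC}, with constants transparently uniform over $b\in[0,T]$; moreover your uniform majorant $K_T(1+\vert\xi\vert^2)^{-1}$ is a slightly cleaner packaging than the paper's, which bounds the integral rather than the integrand. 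The paper's route, by contrast, yields the strictly stronger square-decay $\vert\xi\vert^{-d-1}$ at infinity — more than Lemma \ref{lem:DCT} needs, but available off the shelf from the authors' earlier work, and of independent use if one wanted to relax Dalang's condition.
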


\begin{proof}

Let us recall from \cite[Lemma 2.1]{NZ19EJP} that 
${\displaystyle
\big\vert \FF \mathbf{1}_{\B_b}(\xi)  \big\vert^2 = \left\vert \int_{\B_b} e^{-\bi x \xi} dx \right\vert^2 = (2\pi b)^d | \xi |^{-d} J_{\frac d2}(b | \xi |)^2,
}$
 where, for $p>0$,
  \[
  J_p(x) : = \frac{(x/2)^p}{\sqrt{\pi} \Gamma(p+ \frac12)} \int_0^\pi (\sin \theta)^{2p} \cos(x \cos \theta) d\theta
 \]
 is the Bessel function of first kind with order $p$, which satisfies 
 \begin{itemize}
 \item[(i)] $\sup\big\{ | J_p(x) | : x\in\R_+ \big\} <\infty$,
 \item[(ii)] $| J_p(x) | \leq C | x|^{-1/2}$ for any $x\in\R$ and for some absolute constant $C>0$.
 \end{itemize}
 It is also clear that $| \FF \mathbf{1}_{\B_b} | \lesssim b^d$. Thus, 
  \begin{align*}
 \int_{\R^d} \big\vert \FF \mathbf{1}_{\B_b}(\xi)  \big\vert^2 \mu(d\xi) & =   \int_{| \xi| \leq 1} \big\vert \FF \mathbf{1}_{\B_b}(\xi)  \big\vert^2 \mu(d\xi)  +   \int_{| \xi| > 1} \big\vert \FF \mathbf{1}_{\B_b}(\xi)  \big\vert^2 \mu(d\xi)  \\
 &\lesssim  b^{2d}\mu\big( \{ \xi\in\R^d : | \xi | \leq 1\} \big)  + b^d\int_{| \xi | >1} | \xi |^{-d} J_{d/2}(b | \xi |)^2 \mu(d\xi) \\
 &\lesssim b^{2d} \mu\big( \{ \xi\in\R^d : | \xi | \leq 1\} \big) + b^{d-1} \int_{| \xi | >1} | \xi |^{-d-1}  \mu(d\xi)
  \end{align*}
  using point (ii) in the last step. Because of \eqref{DalangC} and $d\geq 1$, the two integrals in the last display are both finite. Hence the result \eqref{DCT} follows.
  \end{proof}
  
 \begin{lemma} \label{lem:GN}
  For each $n\geq 1$ and $T\in(0,\infty)$
  \[
 \Theta(T,n):= \sup_{(t,x)\in[0,T]\times\R^d} \big| G_n(t,x) \big| <\infty.
  \]
 \end{lemma}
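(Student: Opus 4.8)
The plan is to bound $G_n(t,x)$ uniformly by unwinding the definition \eqref{def:GN} and exploiting the compact support of $\psi_n$ together with the structure of the fundamental solution $G$ in each dimension. First I would observe that $G_n(t,x)=\int_{\R^d}G(t,dy)\psi_n(x-y)$ is simply a convolution of the (possibly measure-valued) fundamental solution against the smooth, compactly supported mollifier $\psi_n$. The quantity to control is therefore the mass of $G(t,\cdot)$ weighted against a bounded, compactly supported function, and the key geometric fact is that $G(t,\cdot)$ is supported in the ball $\B_t$ for every $t$, so only the part of $\psi_n$ over a bounded region contributes.

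The natural approach is to treat the three dimensions separately, since the nature of $G$ changes. For $d=1$, $G(t,\cdot)=\tfrac12\mathbf 1_{\{|\bullet|<t\}}$ is a bounded function, so $|G_n(t,x)|\le\tfrac12\|\psi_n\|_\infty\cdot|\B_t|$, which is bounded on $[0,T]$. For $d=3$, $G(t,\cdot)=\tfrac1{4\pi t}\sigma_t$ is a surface measure of total mass $\sigma_t(\partial\B_t)=4\pi t^2$, so $G_n(t,x)=\tfrac1{4\pi t}\int_{\partial\B_t}\psi_n(x-y)\sigma_t(dy)$ and hence $|G_n(t,x)|\le\tfrac1{4\pi t}\|\psi_n\|_\infty\cdot 4\pi t^2=t\|\psi_n\|_\infty\le T\|\psi_n\|_\infty$, again uniformly bounded. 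The only delicate case is $d=2$, where $G(t,\cdot)=\tfrac1{2\pi\sqrt{t^2-|\bullet|^2}}\mathbf 1_{\{|\bullet|<t\}}$ has an integrable singularity at $|\bullet|=t$ but is not itself bounded.

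For the two-dimensional case the plan is to bound $|G_n(t,x)|\le\|\psi_n\|_\infty\int_{\B_t}\tfrac1{2\pi\sqrt{t^2-|y|^2}}\,dy$ and verify that this integral is finite and bounded on $[0,T]$. Passing to polar coordinates gives $\int_0^t\tfrac{r}{\sqrt{t^2-r^2}}\,dr$, which evaluates to $t$ and is therefore bounded by $T$ on the relevant range; so $|G_n(t,x)|\le\|\psi_n\|_\infty\, T$ and the supremum is finite. I expect this integrability of the $d=2$ singularity to be the main (though modest) obstacle, as it is the only place where a genuine computation beyond a trivial mass bound is required; in each dimension the uniformity over $t\in[0,T]$ follows because the relevant bound grows at most linearly in $t$. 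Finally, since $\|\psi_n\|_\infty=n^d\|\psi\|_\infty<\infty$ for each fixed $n$, combining the three cases yields $\Theta(T,n)<\infty$ as claimed.
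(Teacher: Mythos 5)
Your proof is correct and follows essentially the same route as the paper: both reduce to the bound $|G_n(t,x)|\le \|\psi_n\|_\infty\, G(t,\R^d)$ and use that $\sup_{t\le T}G(t,\R^d)<\infty$. The only difference is that the paper simply cites this finiteness as known, whereas you verify it explicitly dimension by dimension (in fact $G(t,\R^d)=t$ in all three cases), which is a harmless elaboration rather than a different argument.
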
 
  
  \begin{proof} By definition,  
  \[
   \big| G_n(t,x) \big|  =  \int_{\R^d}  \psi_n(x- y)G(t,dy) \leq \| \psi_n \|_\infty G(t, \R^d).
  \]
  It is known that $\sup_{t\leq T}  G(t, \R^d)$ is finite  for any $T\in(0,\infty)$, so that $\Theta(T,n) <\infty$.
  \end{proof}
   
   \medskip
   
\begin{proof}[Proof of Proposition  \ref{prop:MD}]
 
 Recall the Picard iterations from \eqref{Picard}.
  Now let us fix $p\in[2,\infty)$, $T\in(0,\infty)$  and the integers $n,k$. Then, by standard arguments one can show that for any $(t,x)\in[0,T]\times\R^d$,
$u_{n,k+1}(t,x)$  belongs to the space $\mathbb{D}^{1,p}$ and, in view of \eqref{chainrule} and \eqref{ECU1},  we can write  for almost all
$(s,y)\in [0,t] \times \R^d$, 
\begin{align*}
D_{s,y}u_{n,k+1}(t,x) &= G_n(t-s,x-y) \sigma\big(u_{n,k}(s,y) \big)\\
& \qquad  + \int_s^t\int_{\R^d} G_n(t-r, x-z) \sigma'\big( u_{n,k}(r,z) \big) D_{s,y}u_{n,k}(r,z) W(dr,dz).
\end{align*}
Iterating this equation yields, with $r_0=t, z_0=x$,
  \begin{align}
   & D_{s,y} u_{n,k+1}(t,x)  = G_n(t-s,x-y)  \sigma\big( u_{n,k}(s,y) \big)\notag \\
   & \quad +  \int_s^t\int_{\R^d} G_n(t-r_1, x-z_1) \sigma'\big( u_{n,k}(r_1,z_1) \big) G_n(r_1 - s,z_1-y)   \sigma\big(  u_{n,k-1}(s,y) \big)  W(dr_1,dz_1) \notag \\
   &\qquad +   \sum_{\ell=2}^k   \sigma\big( u_{n,k-\ell}(s,y) \big) \int_s^t\cdots\int_s^{r_{\ell-1}} \int_{\R^{d\ell}}G_n(r_\ell -s, z_\ell-y)  \notag \\
   &\qquad\qquad  \times \prod_{j=1}^{\ell} G_n(r_{j-1} - r_{j}, z_{j-1} - z_{j})   \sigma'\big(  u_{n,k+1-j} (r_j,z_j) \big)  W(dr_j,dz_j)=:   \sum_{\ell=0}^k T_\ell.  \label{finiteit} 
  \end{align}   
  Note that by the uniform $L^p$-convergence of $u_{n,k}(t,x)$ as $k\rightarrow \infty$ and $n\rightarrow \infty$, we have
  \[
  \Lambda(T,p) :=  \sup_{n,k\geq 1}\sup_{t\in [0,T]} \sup_{x\in\R^d} \Big\| \sigma\big(u_{n,k}(t,x) \big) \Big\|_p <\infty.
  \]
 Now let us estimate $\| T_\ell \|_p$ for each $\ell\in\{0,1,\dots,  k\}$. 
  
  \bigskip
  
  \noindent\textbf{Case} $\ell=0$: It is clear that $\| T_0 \|_p \leq \Lambda(t,p) G_n(t-s,x-y)$. From now on, let us assume that 
  \begin{center} the support of $\psi$ is contained in $\B_a$ for some $a>0$. 
  \end{center}
Then  the function $x\in\R^d\longmapsto G_n(t,x)$ has a compact support that is contained  in $\B_{\frac a n+t}$.
So that   
\begin{align}\label{ineq:GN}
G_n(t-s,x-y)\leq  \Theta(t-s,n) \mathbf{1}_{\B_{\frac a n+t}}(x-y).
\end{align}
 It follows that
\begin{align}\label{0term}
\big\| T_0 \big\|_p \leq  \Lambda(T,p) \Theta(T,n) \mathbf{1}_{\B_{\frac a n +T}}(x-y).
\end{align}

    \bigskip
  
  \noindent\textbf{Case} $\ell=1$: By the BDG inequality \eqref{BDG},
  \begin{align*}
  \big\| T_1 \big\|_p^2 &\leq 4p \Bigg\|  \int_s^t dr_1\int_{\R^{2d}} dz_1dz'_1 G_n(t-r_1, x-z_1) \sigma'\big( u_{n,k}(r_1,z_1) \big) G_n(r_1 - s,z_1-y)  \\
   &   \qquad  \times G_n(t-r_1, x-z'_1) \sigma'\big( u_{n,k}(r_1,z'_1) \big) G_n(r_1 - s,z'_1-y)   \sigma^2\big(  u_{n,k-1}(s,y) \big) \gamma(z_1-z'_1)   \bigg\|_{p/2} \\
   &\leq 4pL^2 \Lambda(T,p)^2 \int_s^t dr_1\int_{\R^{2d}} dz_1dz'_1 G_n(t-r_1, x-z_1)  G_n(r_1 - s,z_1-y)  \\
   &   \qquad\qquad\qquad\qquad\qquad  \times G_n(t-r_1, x-z'_1)   G_n(r_1 - s,z'_1-y)    \gamma(z_1-z'_1).  
  \end{align*}
  Note that  a necessary condition for $G_n(t-r_1, x-z_1)  G_n(r_1 - s,z_1-y) \neq 0$  is
  \begin{center}
 $ x-z_1\in \B_{\frac an +t-r_1}$ and $z_1-y\in\B_{\frac an+r_1-s}$
  \end{center}
  which implies $x-y\in \B_{\frac {2a}n+t-s}$. This fact, together with Lemma \ref{lem:GN} and \eqref{ineq:GN}, leads to
  \begin{align*}
&  \big\| T_1 \big\|_p^2  \leq 4pL^2 \Lambda(T,p)^2  \Theta(T,n)^2 \mathbf{1}_{\B_{\frac {2a}n+T}}(x-y)  \\
   &\qquad \times \int_s^t dr_1\int_{\R^{2d}} dz_1dz'_1 G_n(t-r_1, x-z_1)     G_n(t-r_1, x-z'_1)     \gamma(z_1-z'_1) \\
   & \leq 4pL^2 (t-s) \Lambda(T,p)^2  \Theta(T,n)^4 \mathbf{1}_{\B_{\frac {2a}n+T}}(x-y) \int_{\R^{2d}} dz_1dz'_1  \mathbf{1}_{\B_{\frac a n+t}}(x-z_1)  \mathbf{1}_{\B_{\frac an+t}}(x-z'_1)     \gamma(z_1-z'_1) \\
   &\leq 4pL^2 (t-s) \Lambda(T,p)^2  \Theta(T,n)^4  \mathcal{U}_{\frac an+T}\mathbf{1}_{\B_{\frac{2a}n+T}}(x-y),
  \end{align*}
  by Lemma \ref{lem:DCT}. It follows that
  \begin{align}\label{1term}
   \big\| T_1 \big\|_p \leq 2  \sqrt{  p  \mathcal{U}_{\frac an+T} (t-s) } \Lambda(T,p)  \Theta(T,n)^2 \mathbf{1}_{\B_{\frac {2a}t+T}}(x-y).
  \end{align}
  
  \bigskip
  
    \noindent\textbf{Case} $\ell\in\{2, \dots, k\}$: We can first represent $T_\ell$ as
    \[
    T_\ell =    \int_s^t \int_{\R^d}    G_n(t-r_1, x-z_1) \sigma'\big( u_{n,k}(r_1,z_1) \big) \mathcal{J}(r_1,z_1) W(dr_1,dz_1),
    \]
    with $  \mathcal{J}(r_1,z_1) $ defined by 
    \begin{align*}
   \mathcal{J}(r_1,z_1 )&= \int_s^{r_1}\cdots\int_s^{r_{\ell-1}} \int_{\R^{d\ell-d}} \sigma\big( u_{n,k-\ell}(s,y) \big) G_n(r_\ell-s,z_\ell-y)\\
   &\qquad \times  \prod_{j=2}^{\ell} G_n(r_{j-1} - r_{j}, z_{j-1} - z_{j})   \sigma'\big(  u_{n,k+1-j} (r_j,z_j) \big)  W(dr_j,dz_j).
    \end{align*}
    In this way, we have 
    \begin{align*}
    \big\| T_\ell \big\|_p^2 & \leq 4p L^2  \int_s^t dr_1 \int_{\R^{2d}} dz_1dz_1'  \gamma(z_1-z_1')   G_n(t-r_1, x-z_1)G_n(t-r_1, x-z'_1) \| \mathcal{J}(r_1,z'_1)     \mathcal{J}(r_1,z_1)\|_{\frac{p}{2}}  \\
    &\leq 4pL^2    \int_s^t dr_1 \int_{\R^{2d}} dz_1dz_1'  \gamma(z_1-z_1')   G_n(t-r_1, x-z_1)G_n(t-r_1, x-z'_1) \|    \mathcal{J}(r_1,z_1)\|_{p} ^2,
    \end{align*}
    using symmetry and the fact that $\| XY\| _{p/2} \leq \| X\|_p \| Y\|_p \leq \frac{ \|X\|_p^2 + \| Y\|_p^2}{2}$. Iterating the above procedure for finite times yields
       \begin{align*}
    \big\| T_\ell \big\|_p^2 & \leq  \big( 4pL^2  \big)^{\ell-1}    \int_s^t  dr_1 \cdots \int_s^{r_{\ell-2}} dr_{\ell-1} \int_{\R^{2d\ell-2d}}   \|    \wh{\mathcal{J}}(r_{\ell-1}, z_{\ell-1})\|_{p}^2  \\
     &\qquad \qquad \times \prod_{j=1}^{\ell-1}  \gamma(z_j-z_j')   G_n(r_{j-1}-r_j, z_{j-1}-z_j)G_n(r_{j-1}-r_j, z_{j-1}-z'_j)  dz_jdz_j', 
    \end{align*}
    with $  \wh{\mathcal{J}}(r_{\ell-1}, z_{\ell-1})$ given by 
    \[
    \int_s^{r_{\ell-1}} \int_{\R^d}   \sigma\big( u_{n,k-\ell}(s,y) \big)  G_n(r_\ell-s, z_\ell-y) G_n(r_{\ell-1} - r_\ell, z_{\ell-1} - z_\ell ) \sigma'\big( u_{n,k+1-\ell}(r_\ell,z_\ell) \big) W(dr_\ell, dz_\ell).
    \]
   Similarly to how we estimate $\| T_1\|_p$, we get
   \[
   \big\|   \wh{\mathcal{J}}(r_{\ell-1}, z_{\ell-1}) \big\|_p^2 \leq 4pL^2 T \Lambda(T,p)^2 \Theta(T,n)^4 \mathcal{U}_{\frac an+T} \mathbf{1}_{\B_{\frac {2a}n+r_{\ell-1}- s}}(z_{\ell-1}-y).
   \]
   As in  Case $\ell=1$, we have the following implication:
   \[
   \mathbf{1}_{\B_{\frac {2a}n+r_{\ell-1}- s}}(z_{\ell-1}-y) \prod_{j=1}^{\ell-1}   G_n(r_{j-1}-r_j, z_{j-1}-z_j)G_n(r_{j-1}-r_j, z_{j-1}-z'_j)  \neq 0 \Longrightarrow x-y\in\B_{\frac {a(\ell+1)}n+t-s}.
   \]
   Note also that using \eqref{ineq:GN} and  integrating out $dz_{\ell-1} dz'_{\ell-1}$, \dots,  $dz_{2} dz'_{2}$ and $dz_1dz_1'$ yields
\begin{align*}
 & \quad \int_{\R^{2d\ell-2d}}    \prod_{j=1}^{\ell-1}  \gamma(z_j-z_j')   G_n(r_{j-1}-r_j, z_{j-1}-z_j)G_n(r_{j-1}-r_j, z_{j-1}-z'_j)  dz_jdz_j' \\
 &\leq \Theta(T,n)^{2\ell-2}   \int_{\R^{2d\ell-2d}}    \prod_{j=1}^{\ell-1}  \gamma(z_j-z_j')  \mathbf{1}_{\B_{\frac an+T}}(z_{j-1}-z_j)  \mathbf{1}_{\B_{\frac an+T}}(z_{j-1}-z'_j)  dz_jdz_j'\\
 & = \Theta(T,n)^{2\ell-2}  \left(  \int_{\R^{2d}}   \gamma(z-z')  \mathbf{1}_{\B_{\frac an+T}}(z)  \mathbf{1}_{\B_{\frac an+T}}(z')  dzdz'    \right)^{\ell-1} \leq  \Theta(T,n)^{2\ell-2}  \mathcal{U}_{\frac an+T}^{\ell-1},
     \end{align*}   
   where $\mathcal{U}_{\frac an+T}$ is defined in Lemma \ref{lem:DCT}. This leads to 
   \[
   \big\| T_\ell\big\|_p^2 \leq \frac{  \Lambda(T,p)^2 \Theta(T,n)^2 }{(\ell-1)!} \big( 4pL^2T \mathcal{U}_{\frac an+T}   \Theta(T,n)^2 \big)^\ell  \mathbf{1}_ { \B_{\frac {a(\ell+1)}n+T} }(x-y).
   \]
   Combining the above cases, we obtain ${\displaystyle 
   \big\| D_{s,y} v_{k+1}(t,x) \big\|_p \leq \sum_{\ell=0}^k \| T_\ell\|_p \lesssim    \mathbf{1}_ { \B_{\frac {a(k+1)}n+T} }(x-y).
   }$
   That is, Proposition \ref{prop:MD} is proved.    
   \end{proof}

We finally proceed with the proof of Theorem \ref{Mainthm}.

\begin{proof}[Proof of Theorem  \ref{Mainthm}] In view of \cite[Lemma 7.2]{CKNP19}, it suffices to prove
\[
V(R) := \text{Var} \left( R^{-d} \int_{\B_R} \prod_{j=1}^m g_j \big(  u(t,x+ \zeta^j) \big)    dx\right) \xrightarrow{R\to\infty} 0,
\]
for any fixed $\zeta^1, \dots,\zeta^m \in \R^d$ and $g_1, \dots, g_m\in C_b(\R)$ such that each $g_j$ vanishes at zero and has Lipschitz constant bounded by $1$. 

\medskip

  Using the elementary fact  $\text{Var}(X+Y) \leq 2 \text{Var}(X) + 2 \text{Var}(Y) $ for any two square-integrable random variables $X$ and $Y$, we write 
\begin{align*}
V(R) &\leq  2  \text{Var} \left( R^{-d} \int_{\B_R}  \mathcal{R}_{n,k}(x)   dx\right)   + 4  \text{Var} \left( R^{-d} \int_{\B_R}\Big[  \mathcal{R}_n(x)  -  \mathcal{R}_{n,k}(x) \Big]  dx \right)  \\
&\qquad\qquad +4  \text{Var} \left( R^{-d} \int_{\B_R}\Big[  \mathcal{R}(x)  -  \mathcal{R}_n(x) \Big]  dx \right):= 2 V_{1,n,k}(R) + 4 V_{2,n,k}(R) + 4 V_{3,n}(R),
\end{align*}
where
\[
\mathcal{R}(x) :=  \prod_{j=1}^m g_j \big(  u(t,x+ \zeta^j) \big), ~ \mathcal{R}_n(x) : = \prod_{j=1}^m g_j \big(  u_n(t,x+ \zeta^j) \big) ~ \text{and} ~ \mathcal{R}_{n,k}(x) : = \prod_{j=1}^m g_j \big(  u_{n,k}(t,x+ \zeta^j) \big).
\]
Using the stationarity and Minkowski's inequality,
\begin{align*}
 V_{3,n}(R)  & \leq \left\|  R^{-d} \int_{\B_R}\big[  \mathcal{R}_n(x)  -  \mathcal{R}(x) \big]  dx \right\|_2^2 \leq  \left(  R^{-d} \int_{\B_R}\big\|  \mathcal{R}_n(x)  -  \mathcal{R}(x) \big\|_2  dx \right)^2 \\
&=\omega_d^2 \Big\|  \mathcal{R}_n(0)  -  \mathcal{R}(0) \Big\|_2^2 \xrightarrow{n\to+\infty} 0,\quad \text{by \eqref{uAPP}}.
\end{align*}
The above limit takes place uniformly in $R>0$. Therefore, for any given $\e>0$, we can find $n \geq N_\e$ big enough such that $ V_{3,n}(R)  \leq \e$,  $\forall R>0$. \emph{From now on, let us fix such an integer $n$.}

\medskip
 
 Now let us estimate $V_{2,n,k}(R)$ similarly: Using Minkowski's inequality,

\begin{align*}
V_{2,n,k}(R) \leq \left\| R^{-d} \int_{\B_R}\Big[  \mathcal{R}_n(x)  -  \mathcal{R}_{n,k}(x) \Big]  dx \right\|_2^2 \leq \omega_d^2  \sup_{x\in\R^d} \Big\| \mathcal{R}_n(x)  -  \mathcal{R}_{n,k}(x) \Big\|_2^2 \xrightarrow{k\to\infty} 0,
\end{align*} 
 as a consequence of \eqref{BDD:nk}. So we can find some big $k\geq K_{\e, n}$ such that $V_{2,n,k}(R) \leq  \e$, $\forall R>0$. \emph{From now on, let us fix such an integer $k$.} 

\medskip

Finally, let us estimate the term $V_{1,n,k}(R)$: First  by using the Poincar\'e inequality  \eqref{Poincare}, we obtain
\begin{align}  \nonumber
V_{1,n,k}(R)& \leq  R^{-2d} \int_{\B_R^2} dx dy   \big\vert  \text{Cov} ( \mathcal{R}_{n,k}(x),  \mathcal{R}_{n,k}(y)) \big\vert  \\  \label{EQ2}
& \leq R^{-2d} \int_{\B_R^2} \int_0^t  \int_{\R^{2d}}    \| D_{s,z} \mathcal{R}_{n,k}(x) \| _2\| D_{s,z'} \mathcal{R}_{n,k}(y) \| _2 \,\gamma(z-z')dz dz' ds  dxdy .
\end{align} 
By the chain rule \eqref{chainrule},
\[
\big\vert D_{s,z} \mathcal{R}_{n,k}(x) \big\vert \leq \mathbf{1}_{(0,t)}(s)  \sum_{j_0=1}^m \left\vert \prod_{j=1, j\not = j_0} ^m g_j(u_{n,k}(t,x+ \zeta^j)) \right\vert   \big\vert D_{s,y} u_{n,k}(t,x+ \zeta^{j_0}) \big\vert,
\]
which implies, for any $s\in [0,t]$, 
\begin{equation} \label{EQ1}
\| D_{s,z} \mathcal{R}_{n,k}(x) \|_2   \le  \max_{1\le j \le m} \sup_{a\in \R} | g_j(a) | ^{m-1} \sum_{j_0=1}^m       \big\|  D_{s,y} u_{n,k}(t,x+ \zeta^{j_0}) \big\|_2  \lesssim  \sum_{j_0=1}^m \1_{\B_b}\big(x-y +\zeta^{j_0}\big),
\end{equation}
where  $b= a(k+1)/n+T$, as a consequence of Proposition \ref{prop:MD}. Plugging  \eqref{EQ1} into \eqref{EQ2}, yields
\begin{align*}  
V_{1,n,k}(R)& \lesssim  R^{-2d}\sum_{j,\ell =1}^m   \int_{\B_R^2} \int_0^t  \int_{\R^{2d}}   \1_{\B_b}(x-z +\zeta^{j}) \1_{\B_b} (y-z' +\zeta^{\ell})\gamma(z-z')dz dz' ds  dxdy \\
&\lesssim  R^{-2d}\sum_{j,\ell =1}^m   \int_{\B_R^2}   \int_{\R^{2d}}   \1_{\B_b}(x-z +\zeta^{j}) \1_{\B_b} (y-z' +\zeta^{\ell})\gamma(z-z')dz dz'   dxdy. 
\end{align*} 
Therefore using Fourier transform, we write
\begin{align*}
 \wh{V} :=& \int_{\B_R^2}   \int_{\R^{2d}}   \1_{\B_b}(x-z +\zeta^{j}) \1_{\B_b} (y-z' +\zeta^{\ell})\gamma(z-z')dz dz'   dxdy\\
 & =  \int_{\B_R^2}   \int_{\R^{d}}   e^{-\bi (x-y+\zeta^j -\zeta^\ell )\xi} \big\vert  \FF \1_{\B_b}( \xi)  \big\vert ^2 \mu(d\xi)dxdy.
 \end{align*}
 Put $\ell_R(\xi) = \int_{\B_R^2}  e^{-\bi (x-y  )\xi} dxdy   $, which is a nonnegative function. So we get 
 \begin{align*}
 \wh{V} &\leq    \int_{\R^{d}}  \ell_R(\xi) \big\vert  \FF \mathbf{1}_{\B_b}( \xi)  \big\vert ^2 \mu(d\xi) dxdy =  \int_{\B_R^2}  \int_{\R^{d}}   e^{-\bi (x-y  )\xi}  \big\vert  \FF \mathbf{1}_{\B_b}( \xi)  \big\vert ^2 \mu(d\xi) dxdy \\
 &= R^{2d} \int_{\B_1^2}  \int_{\R^{d}}   e^{-\bi R(x-y  )\xi}  \big\vert  \FF \mathbf{1}_{\B_b}( \xi)  \big\vert ^2 \mu(d\xi) dxdy.
 \end{align*}
 That is, 
 \begin{align*}
 R^{-2d}\wh{V} &\leq \int_{\R^{d}}  \left(\int_{\B_1^2}    e^{-\bi R(x-y  )\xi}  dx dy  \right)   \big\vert  \FF \mathbf{1}_{\B_b}( \xi)  \big\vert ^2 \mu(d\xi) 
  \end{align*}
Since $\mu(\{ 0\})=0$, for $\mu$-almost every $\xi$,  $\int_{\B_1^2}    e^{-\bi R(x-y  )\xi}  dx dy$ converges to zero as $R\to\infty$, by Riemann-Lebesgue's lemma.    Thus, by dominated convergence theorem with the dominance condition \eqref{DCT}, we deduce that
$ R^{-2d}\wh{V} $ converges to zero as $R\to+\infty$. This leads to 
$
 V_{1,n,k}(R) \to 0
$, as $R\to+\infty$.
It follows that
$
\limsup_{R\to+\infty} V(R) \leq  8\e,
$
where $\e>0$ is arbitrary. Hence we can conclude our proof. 
\end{proof}



 \noindent\textbf{Acknowledgment:}  D. Nualart is supported by NSF Grant DMS 1811181.

 \end{document}